   \renewcommand*{\backrefalt}[4]{%
     \ifcase #1 %
     \or
     \vspace{0pt} \footnotesize (In \S#2)%
     \else
     \vspace{0pt} \footnotesize (In \S#2)
     \fi}
\renewcommand*{\backref}[1]{}
\newcommand\junk[1]{}
  \newcommand\cyr{%
    \renewcommand\rmdefault{wncyr}%
    \renewcommand\sfdefault{wncyss}%
    \renewcommand\encodingdefault{OT2}%
    \normalfont
    \selectfont}
  \DeclareTextFontCommand{\textcyr}{\cyr}
\newtheorem{Theorem}{Theorem}
\newtheorem{Proposition}{Proposition}
\newtheorem{Corollary}{Corollary}
\newtheorem*{Corollary*}{Corollary}
\newtheorem*{Lemma*}{Lemma}
\newtheorem*{Theorem*}{Theorem}
\theoremstyle{remark}
\newtheorem{Example}{Example}
\newcommand\complexes{{\mathbb C}}
\newcommand\integers{{\mathbb Z}}
\newcommand\rationals{{\mathbb Q}}
\newcommand\naturals{{\mathbb N}}
\theoremstyle{plain}
\theoremstyle{remark}
\renewenvironment{quotation}
{\list{}{
    \setlength\itemindent{0em}%
    \setlength\leftmargin{1.5em}
    \setlength\rightmargin{1.5em}
  }%
\item[]}
{\endlist}
\newcommand\defn[1]{{\bf #1}} 
\newcommand\FF{{\mathbb F}}
\newcommand\wt{\widetilde}
\font\co=lcircle10
\def\jr{\smash{\raise2pt\hbox{\co \rlap{\rlap{\char'005} \char'007}}
               \raise6pt\hbox{\rlap{\vrule height6.5pt}}
               \raise2pt\hbox{\rlap{\hskip4pt \vrule height0.4pt depth0pt
                width7.7pt}}}}
\def\je{\smash{\raise2pt\hbox{\co \rlap{\rlap{\char'005}
                \phantom{\char'007}}}\raise6pt\hbox{\rlap{\vrule height6pt}}}}
\def\+{\smash{\lower2pt\hbox{\rlap{\vrule height14pt}}
                \raise2pt\hbox{\rlap{\hskip-3pt \vrule height.4pt depth0pt
                width14.7pt}}}}
\def\textcross{\ \smash{\lower4pt\hbox{\rlap{\hskip4.15pt\vrule height14pt}}
                \raise2.8pt\hbox{\rlap{\hskip-3pt \vrule height.4pt depth0pt
                width14.7pt}}}\hskip12.7pt}
\def\textelbow{\ \hskip.1pt\smash{\raise2.8pt%
                \hbox{\co \hskip 4.15pt\rlap{\rlap{\char'005} \char'007}
                \lower6.8pt\rlap{\vrule height3.5pt}
                \raise3.6pt\rlap{\vrule height3.5pt}}
                \raise2.8pt\hbox{%
                  \rlap{\hskip-7.15pt \vrule height.4pt depth0pt width3.5pt}%
                  \rlap{\hskip4.05pt \vrule height.4pt depth0pt width3.5pt}}}
                \hskip8.7pt}
\begin{document}
\pagestyle{plain}

\title{Randomly juggling backwards}
\dedicatory{to Ron Graham, former president of the International
  Juggler's Association, on his 80th birthday}

\author{Allen Knutson}
\thanks{AK was supported by NSF grant 0303523.}
\email{allenk@math.cornell.edu}

\maketitle

\renewcommand\AA{{\mathbb A}}
\newcommand\QQp{{\rationals_p}}
\newcommand\ZZp{{\integers_p}}

\begin{abstract}
  We recall the directed graph of \emph{juggling states}, closed walks within
  which give juggling patterns, as studied by Ron in \cite{Chung,Butler}.
  Various random walks in this graph have been studied before by
  several authors, and their equilibrium distributions computed. We
  motivate a random walk on the reverse graph (and an enrichment
  thereof) from a very classical linear algebra problem, leading to a
  particularly simple equilibrium: a Boltzmann distribution closely
  related to the Poincar\'e series of the $b$-Grassmannian in $\infty$-space.

  We determine the most likely asymptotic state in the limit of many balls, 
  where in the limit the probability of a $0$-throw is kept fixed.
\end{abstract}

{\Small
  \setcounter{tocdepth}{3}
  \tableofcontents
}

\section{Walks on the juggling digraph}\label{sec:intro}

The ``siteswap'' theory of juggling patterns was invented in
the early-mid '80s by Paul Klimek of Santa Cruz, Bruce Tiemann and
Bengt Magnusson at Caltech, 
and Mike Day and Colin Wright in Oxford. In 1988, having had some
time to digest this theory, Jack Boyce and I (each also at Caltech) 
independently invented a directed graph of ``juggling states'' 
to study siteswaps. We recall this definition now.

\subsection{The digraph}

Fix $b\in \naturals$ for the rest of the paper; it is called the
\defn{number of balls}. (Of course one might want a theory in which $b$ varies,
but we won't vary it in this paper.) A \defn{juggling state} $\sigma$
is just a $b$-element subset of $\naturals$, but we will draw it
as a semi-infinite word in $\times$ and $-$ using only $b$ many $\times$s,
e.g. $- \times \times - -\times - - - \ldots$. We won't generally
write any of the infinitely many $-$s after the last $\times$.
The physical interpretation of $\sigma$ is as follows: if a juggler
is making one throw each second, and we stop them mid-juggle and let the
$b$ balls fall to the ground, $\sigma$ records the sound ``wait,
thump, thump, wait, wait, thump'' (and thereafter, silence) that the balls make.
In the standard ``cascade'' pattern ($b$ odd) and asynchronous ``fountain'' 
pattern ($b$ even), this state is always the \defn{ground state} 
$\times \times \cdots \times$,
but other patterns go through more interesting states.\footnote{%
  E.g. the $3$-ball ``shower'' (juggling in a circle) alternates between
  the states $\times \times - \times$ and $\times - \times - \times$.}

\newcommand\sh{sh}

Put a directed edge $\sigma \to \tau$ if $\times \tau \supseteq \sigma$ 
(meaning, containment of the $\times$-locations). 
If the first letter of $\sigma$ is $\times$, then there is one
extra $\times$ in $\times \tau$ not in $\sigma$, in some position $t+1$;
call $t$ the \defn{throw} and label the edge with it. 
If the first letter of $\sigma$ is $-$, the throw is conventionally
taken to be $0$ (even though it's not much of a throw; the juggler
just waits for one beat).  \break 
If $\sigma \to \tau$, then any two of $(\sigma,\tau,t)$ determine the third.

\newcommand\To\longrightarrow
A closed walk in this digraph is called a \defn{juggling pattern} and
is determined by its sequence of throws, the \defn{siteswap}. 
Here is the (excellent) siteswap $501$ as a closed walk:
$$ \times - \times \qquad\stackrel{5}\To\qquad - \times - - \times
\qquad\stackrel{0}\To\qquad \times - - \times
\qquad\stackrel{1}\To\qquad \times - \times $$
Perhaps the earliest nontrivial theorem about siteswaps is
Ron et al.'s calculation that the number of patterns of length $n$
with at most $b$ balls is $(b+1)^n$ \cite{AMM}.
Ron and his coauthors have also counted
cycles that start from a given state \cite{Chung, Butler}. 

The first book on the general subject is \cite{Polster}.

In \cite{Warrington} was studied the finite subgraph 
in which only throws of height $\leq n$ are allowed,
giving ${n\choose b} = {n\choose n-b}$ states.
This digraph for $(b,n)$, with arrows reversed, 
is isomorphic to the one for $(n-b,n)$:
we reverse the length-$n$ states, and switch $\times$s and $-$s.
This remark serves as foreshadowing for the second paragraph below,
and the paper.

\subsection{A Markov chain}\label{ssec:basechain}

In \cite{Warrington, Varpanen1, Varpanen2, Varpanen3, Corteel1, Corteel2}
are studied Markov chains of juggling states, in which the possible throws
from a state are given probabilities. (Sometimes probability zero, making
them impossible; e.g. \cite{Warrington} puts a bound on the highest throw.)

We now define a Markov chain that follows the edges {\em backwards},
using a coin with $p($heads$) = 1/q$. 
{\bf However, we will never write the arrows as reversed:
  $\tau \to \tau'$ will have a consistent meaning throughout the paper.}
Let $\sigma$ be a juggling state.

\begin{enumerate}
\item Flip the coin at most $b$ times, or until it comes up tails.
\item If the coin never comes up tails, attach $-$ to the front of $\sigma$.
\item If the coin comes up tails on the $i$th flip, move the $i$th {\em last}
  $\times$ in $\sigma$ to the front, \\ leaving a $-$ in its place.
\end{enumerate}
Example: $\sigma = - -  \times \times -\, \times$, so $b=3$ and
we flip at most three times. If the flips are
\begin{itemize}
\item Tails: we get $\times - - \times \times - \cancel \times$, 
  i.e. $\times - - \times \times$.
\item Heads, then tails: we get $\times - - \times \cancel \times -  \times$, 
  i.e. $\times - - \times - - \times$.
\item Heads, heads, tails: we get $\times - - \cancel \times \times -  \times$, 
  i.e. $\times - - - \times - \times$.
\item Heads, heads, heads: we get $- - -  \times \times -  \times$.
\end{itemize}
Note that the resulting juggling states are exactly those that 
point to $\sigma$ in the digraph; \break the $\times$ that moves is
the ball thrown (if any; in the all-heads case the ``throw'' is a $0$).

Our main results in this paper are
\begin{itemize}
\item a calculation of the (quite simple) stationary distribution of
  this chain, 
\item a motivation and solution of the chain from linear algebra considerations,
  and
\item a study of the typical states in the $b\to\infty$ limit.
\end{itemize}

The limit $q\to\infty$ (always tails) is boring; after $b$ throws we
get to the ground state and stay there. The limit $q\to 1$ ($b$ fixed) has no
stationary distribution. In \S \ref{sec:stats} we show
the limit $b\to\infty, q\to 1$ is well-behaved if we keep fixed the
all-heads probability $E = q^{-b}$, which acts as a sort of temperature. 
Specifically, we compute 
the typical ball density around position $h$ to be $(1-E)/(1+E^{1-h/b}-E)$.

The linear algebra itself suggests in \S \ref{sec:richer} a Markov
chain on (the reverse of) a richer digraph with distinguishable balls
that can bump one another out of position; we solve this one as well.
(This digraph appeared first in \cite{Corteel2}, though we had been
considering it already for a few years; 
as far as we can tell our motivations for studying it 
are different than theirs.)

\subsubsection*{Acknowledgments} The author is very grateful to
Ron Graham for many conversations about mathematics and juggling,
and most especially for sending the author to speak about these subjects
in his stead at the Secondo Festival Della Matematica in Rome.\footnote{%
  This lecture is available at
  \url{https://www.youtube.com/playlist?list=PL3C8EC6BA111662D4} 
  in both English and Italian.}
Many thanks also to Jack Boyce, Svante Linusson, Harri Varpanen, 
and Greg Warrington.
Some related linear algebra was developed with David Speyer and Thomas Lam in
\cite{KLS}.

\section{The linear algebra motivation}\label{sec:motivation}

Let $U_b$ be the space of $b\times \naturals$ matrices of full rank, 
over the field $\FF$ with $q$ elements. Define a map
$$ \sigma:\ U_b \to \{\text{juggling states} \} $$
where there is a $\times$ in position $i$ of $\sigma\left(
  [\vec c_0 \vec c_1 \vec c_2 \cdots] \right)$
if $\vec c_i$ is not in the span of $\vec c_0, \ldots, \vec c_{i-1}$.
Equivalently, $\sigma(M)$ records the pivot columns in $M$'s
reduced row-echelon form.

\newcommand\dom\backslash

This $\sigma$ is preserved by and is the complete
invariant for $\{$row operations$\} \times \{$rightward column operations$\}$.
On the Grassmannian $GL(b) \dom U_b$ of $b$-planes in $\FF^\naturals$,
to which $\sigma$ descends, $\sigma$ records the 
(finite-codimensional) Bruhat cell of $rowspan(M)$.

We define a Markov chain on $U_b$, called ``add a random column $\vec c$ 
on the left'', meaning uniformly w.r.t. counting measure on $\FF^b$. 
Although this chain does not have an invariant probability distribution, 
it obviously preserves counting measure on $U_b$.

\begin{Proposition}\label{prop:Gtrans}
  Let $M \in U_b$, so $L = [\vec c\ M]$ is also in $U_b$.
  Then $\sigma(L) \to \sigma(M)$ in the juggling digraph.
  If we let $\tau$ range over the finite set of possible values of $\sigma(L)$,
  the probability of obtaining $\tau$ is given by the process
  described in \S \ref{ssec:basechain}.
\end{Proposition}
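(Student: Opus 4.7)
There are two assertions to prove: (i) that $\sigma(L) \to \sigma(M)$ is an edge of the digraph, and (ii) that the induced distribution on $\sigma(L)$ matches the backward coin-flip process. Part (i) is immediate from definitions: $\sigma(L)$ has $\times$ at a position $i+1 > 0$ iff $\vec c_i \notin \mathrm{span}(\vec c, \vec c_0, \ldots, \vec c_{i-1})$, which forces $\vec c_i \notin \mathrm{span}(\vec c_0, \ldots, \vec c_{i-1})$, so $\sigma(M)$ has $\times$ at position $i$ and hence $\times\sigma(M)$ has $\times$ at position $i+1$. Position $0$ of $\times\sigma(M)$ is $\times$ by construction, so $\times\sigma(M) \supseteq \sigma(L)$, which is exactly the edge condition.

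For part (ii) I would change coordinates on $\FF^b$ so that the pivot columns $\vec c_{p_1}, \ldots, \vec c_{p_b}$ of $M$ (at positions $p_1 < \cdots < p_b$) become the standard basis $e_1, \ldots, e_b$. This is legal because $\sigma$ is invariant under row operations and uniform measure on $\FF^b$ is invariant under $GL(b)$. Let $V_k := \mathrm{span}(e_1, \ldots, e_k)$; since a non-pivot column of $M$ is dependent on earlier pivot columns, every column of $M$ at position $\leq p_k$ lies in $V_k$. The random $\vec c$ lies in a unique Bruhat-like stratum $V_j \setminus V_{j-1}$, with $j=0$ meaning $\vec c = 0$.

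The key claim, which is the main technical content, is: for $j \geq 1$, $\sigma(L)$ equals $\times\sigma(M)$ with the letter at position $p_j + 1$ changed from $\times$ to $-$; and for $j = 0$, $\sigma(L) = -\sigma(M)$. To check this I would inspect, for each pivot index $k$, whether $e_k \in \mathrm{span}(\vec c) + V_{k-1}$. The cases $k > j$ (the sum collapses to $V_{k-1}$, which misses $e_k$) and $k = j$ (the sum is $V_j$, which contains $e_j$) are routine; the case $k < j$ is the one requiring care, and relies on the observation that $\vec c \bmod V_{k-1}$ has a nonzero $e_j$-component but vanishing $e_k$-component, so no scalar multiple of it can equal $e_k \bmod V_{k-1}$.

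Given the key claim, the $j \geq 1$ outcome is exactly the backward-chain outcome when tails first appears on flip $i := b - j + 1$, since losing the $\times$ at position $p_j + 1$ of $\times\sigma(M)$ and keeping the new $\times$ at position $0$ amounts to moving the $i$-th-last $\times$ of $\sigma(M)$ to the front. The probabilities match by direct count: $|V_j \setminus V_{j-1}|/q^b = (q-1)/q^{b-j+1} = (q-1)/q^i$ equals $(1/q)^{i-1}(1 - 1/q)$, while $\Pr(\vec c = 0) = q^{-b}$ matches the all-heads probability $(1/q)^b$. The only genuinely tricky ingredient is the $k < j$ case of the key claim; everything else is bookkeeping.
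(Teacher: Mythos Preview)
Your proof is correct and follows the same approach as the paper: stratify the choice of $\vec c$ by the minimal $j$ with $\vec c$ in the span of the first $j$ pivot columns (your $V_j\setminus V_{j-1}$), identify the resulting $\sigma(L)$, and count. The paper simply asserts the key claim (``$\sigma(L)$ is $\sigma(M)$ with its $j$th $\times$ moved to the front'') without justification, whereas you actually sketch the verification; one small slip in that sketch is that $\vec c \bmod V_{k-1}$ need not have vanishing $e_k$-component, but this is irrelevant since the nonzero $e_j$-component alone already forces $\alpha=0$ in $\alpha\vec c \equiv e_k$, which is all you need.
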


\begin{proof}
  For the first statement, we need only observe that if a column is
  pivotal in $[\vec c\ M]$, it is certainly pivotal in $M$.

  For the second, choose $j$ minimal such that $\vec c$ is 
  in the span of $M$'s left $j$ pivot columns.
  \begin{itemize}
  \item $j=0 \iff \vec c=0 \iff \sigma(L)$ is $\sigma(M)$ with a $-$ in front.
    Otherwise,
  \item $\sigma(L)$ is $\sigma(M)$ with its $j$th $\times$ moved to the front.
    There are $q^j$ $\vec c$s in the span of those $j$ columns,
    of which $q^{j-1}$ are in the span of the first $j-1$,
    for a probability of 
    $\left(q^j - q^{j-1}\right)/q^b = \left( 1-\frac{1}{q}\right)/q^{b-j}$,
    also the probability of tails after $b-j$ heads.
  \end{itemize}  
  Very similar results to the first statement appeared in 
  \cite{KLS} and \cite{Postnikov}, but about rotating the columns
  of a finite matrix.
\end{proof}

We now want to push this ``measure'' down to the set of juggling states,
i.e. for each juggling state $\tau$, we want to define
the probability that $M \in U_b$ has $\sigma(M) = \tau$.

\begin{Proposition}\label{prop:Gcount}
  Let $\tau$ be a juggling state, 
  and pick $N > $ the last $\times$-position of $\tau$.
  Then the fraction of $b\times N$ matrices with pivots in position $\tau$ is
  $$ 
  \frac{|GL_b(q)|}{q^{b^2}} 
  \ \bigg/\ q^{\ell(\tau)}
  \qquad \text{where }
  \ell(\tau) := { \#\{\text{``inversion" pairs }\ldots-\ldots\times\ldots\text{ in }\tau\}}
  $$
  independent of $N$.
\end{Proposition}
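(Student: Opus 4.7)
My plan is to count full-rank $b\times N$ matrices with pivot shape $\tau$ by reducing to reduced row-echelon form (RREF), using that the $GL_b(\FF)$-action on such matrices is free (since rank is $b$) and that each orbit has a unique RREF representative. Hence the number of $b\times N$ matrices with pivot shape $\tau$ equals $|GL_b(q)|$ times the number of RREFs with pivot shape $\tau$, and dividing by $q^{bN}$ and regrouping the factor $q^{bN} = q^{b^2}\cdot q^{b(N-b)}$ should yield the stated formula, provided the RREF count works out to $q^{b(N-b)-\ell(\tau)}$.

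The next step is to count RREF matrices with pivot columns at the positions of $\times$ in $\tau$. Pivot columns are forced to be the standard basis vectors $e_1,\ldots,e_b$, contributing no freedom. For a non-pivot column at position $j$ (a $-$ of $\tau$), the entry in row $i$ is forced to $0$ when the $i$th pivot lies to the right of position $j$ (row $i$'s leading entry comes later), and is free in $\FF$ otherwise. So the number of free entries in non-pivot column $j$ is exactly the number of $\times$s strictly to the left of $j$, and summing over non-pivot columns the total number of free entries is
\[
\ell'(\tau) := \#\{\text{pairs }\ldots\times\ldots-\ldots\text{ in } \tau_1\cdots\tau_N\}.
\]
Thus the number of RREFs is $q^{\ell'(\tau)}$.

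To finish, I relate $\ell'$ to $\ell$ by the trivial complementary identity: every ordered pair of one $\times$-position and one $-$-position (in the truncation of $\tau$ to length $N$) is counted exactly once by $\ell + \ell'$, so
\[
\ell(\tau) + \ell'(\tau) = b(N-b).
\]
Therefore the fraction of $b\times N$ matrices with pivot shape $\tau$ is
\[
\frac{|GL_b(q)|\cdot q^{\ell'(\tau)}}{q^{bN}} \;=\; \frac{|GL_b(q)|}{q^{b^2}}\cdot q^{\ell'(\tau)-b(N-b)} \;=\; \frac{|GL_b(q)|}{q^{b^2}}\cdot\frac{1}{q^{\ell(\tau)}},
\]
which is manifestly independent of $N$ (any extra $-$s past the last $\times$ contribute equally to $\ell'$ on the top and to $b(N-b)$ on the bottom).

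There isn't really a substantive obstacle here; the whole content is the combinatorial bookkeeping of free entries in RREF, and the one place to be careful is the direction of the inversion statistic, where $\ell(\tau)$ counts $-\ldots\times$ pairs while the free-entry count naturally gives the complementary $\times\ldots-$ statistic $\ell'(\tau)$. The fact that the answer is $N$-independent is forced by $\ell+\ell'=b(N-b)$ together with the fact that $|GL_b(q)|/q^{b^2}$ is exactly the density of invertible $b\times b$ matrices, which is the natural ``finite-codimension normalization'' alluded to by pushing the counting measure on $U_b$ down through $\sigma$.
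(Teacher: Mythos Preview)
Your proof is correct and follows essentially the same approach as the paper's: reduce to RREF via the free $GL_b(\FF)$-action, fix the pivot columns as an identity matrix, and count degrees of freedom in the remaining $b\times(N-b)$ non-pivot block. The only cosmetic difference is that the paper counts the \emph{forced zeros} in that block directly (each one coming from a $-\ldots\times$ pair, hence $\ell(\tau)$ many), whereas you count the \emph{free entries} (your $\ell'(\tau)$, the $\times\ldots-$ pairs) and then invoke the complementary identity $\ell+\ell'=b(N-b)$; these are two sides of the same bijection.
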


\begin{proof}
  Each such $M$ is row-equivalent, by a unique element of $GL_b(q)$,
  to a unique one in reduced row-echelon form. The pivotal columns in
  that are fixed (an identity matrix), accounting for the $q^{b^2}$ factor.
  With those columns erased, the remaining $b\times (N-b)$ matrix has a 
  partition's worth of $0$s in the lower left, and the complementary
  partition of free variables in the upper right. Each $0$ entry 
  corresponds to a pair $\ldots - \ldots \times \ldots$ in $\tau$.
\end{proof}

Rewriting the prefactor $  |GL_b(q)| \big/ {q^{b^2}} $, we get

\begin{Corollary}\label{cor:basedist}
  The mapping $\tau \mapsto \prod_{i=1}^b (1 - q^{-i}) \big/\ q^{\ell(\tau)}$
  is a probability measure on the space of juggling states 
  (meaning it sums to $1$, summing over all $\tau$).
\end{Corollary}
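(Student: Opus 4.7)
The plan is to derive the corollary by summing the formula of Proposition~\ref{prop:Gcount} over all juggling states whose $\times$-positions lie in a growing finite window $\{0,1,\ldots,N-1\}$, and then letting $N\to\infty$.

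For fixed $N$, I would first observe that a $b \times N$ matrix has a full set of pivots if and only if it has full rank, and in that case its pivot pattern is a juggling state $\tau$ with support in $\{0,\ldots,N-1\}$. So summing Proposition~\ref{prop:Gcount} over all such $\tau$ computes the fraction of $b \times N$ matrices of full rank. A standard count of ordered linearly independent $b$-tuples in $\FF^N$ gives this fraction as $\prod_{i=0}^{b-1}(1 - q^{i-N})$. Combined with the rewriting $|GL_b(q)|/q^{b^2} = \prod_{i=1}^b (1-q^{-i})$ indicated just before the corollary, this yields the finite-$N$ identity
$$\prod_{i=1}^b (1 - q^{-i}) \sum_{\tau:\,\mathrm{supp}(\tau)\subseteq\{0,\ldots,N-1\}} q^{-\ell(\tau)} \;=\; \prod_{i=0}^{b-1}\bigl(1 - q^{i-N}\bigr).$$

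Next I would send $N\to\infty$. Every juggling state is eventually included in the left-hand sum (once $N$ exceeds its last $\times$-position), so the left-hand sum converges to $\prod_{i=1}^b(1-q^{-i})\sum_\tau q^{-\ell(\tau)}$ ranging over \emph{all} juggling states. Meanwhile each factor $(1-q^{i-N})$ on the right tends to $1$. The limiting identity $\prod_{i=1}^b(1-q^{-i}) \sum_\tau q^{-\ell(\tau)} = 1$ is exactly the corollary.

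The only step requiring any care is the interchange of limit and infinite sum, but this is routine monotone convergence: all summands are non-negative and the indexing sets are nested with union equal to the full set of juggling states. As a reassuring cross-check independent of Proposition~\ref{prop:Gcount}, juggling states correspond bijectively to partitions $\lambda$ of length at most $b$ via $\lambda_i := c_{b-i+1} - (b-i)$ (where $c_1<\cdots<c_b$ are the $\times$-positions), and a small computation shows $\ell(\tau) = |\lambda|$; the identity $\sum_\tau q^{-\ell(\tau)} = \prod_{k=1}^b (1-q^{-k})^{-1}$ is then the classical Poincar\'e series for partitions with at most $b$ parts (the one alluded to in the abstract) evaluated at $x = q^{-1}$.
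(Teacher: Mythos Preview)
Your proof is correct and follows essentially the same route as the paper's: sum Proposition~\ref{prop:Gcount} over all $\tau$ supported in $[0,N-1]$ to obtain the fraction of full-rank $b\times N$ matrices, then let $N\to\infty$. You are simply more explicit than the paper (computing that fraction as $\prod_{i=0}^{b-1}(1-q^{i-N})$ and invoking monotone convergence), and your partition cross-check is exactly the remark the paper makes immediately after the corollary.
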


\begin{proof}
  Summing over only those $\tau$ with last $\times$ in position $\leq N$,
  we get the fraction of $b\times N$ matrices that are full rank.
  This goes quickly to $1$ as $N\to \infty$. 
\end{proof}

Put another way $ \prod_{i=1}^b (1-q^{-i})^{-1} = \sum_\tau q^{-\ell(\tau)} $,
which is easily justified for $q$ a formal variable: 
each side\footnote{%
  Matt Szczesny points out that this is a ``partition function''.
  If you don't get that joke be grateful.}
is a sum over Young diagrams with columns of height at most $b$,
of $q^{-\text{area}}$. The LHS computes this by counting how many columns 
of height $i$ there are, for each $i$.

The Weil conjectures relate counting points over $\FF$ to homology,
and each side of this equation is computing the Poincar\'e series of
the Grassmannian of $b$-planes in $\complexes^\naturals$.
This is closely related to Bott's formula for the Poincar\'e series of the 
{\em affine} Grassmannian, which also was related to juggling in \cite{ER}.

\begin{Theorem}\label{thm:basechain}
  This distribution in corollary \ref{cor:basedist} 
  is stationary for the Markov chain in \S \ref{ssec:basechain}.
\end{Theorem}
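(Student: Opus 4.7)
The plan is to deduce stationarity from the linear-algebra realization of the chain already set up in the preceding two propositions. Proposition~\ref{prop:Gtrans} identifies the chain's transition probabilities out of $\sigma$ with the conditional distribution of $\sigma(L)$ given $\sigma(M)=\sigma$ (for $M$ full rank and $L=[\vec c\; M]$), and Proposition~\ref{prop:Gcount} identifies the pushforward of uniform measure on $b\times N$ matrices under $\sigma$ with $\pi$, on full-rank states of support in $[0,N-1]$.

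First I would fix a target full-rank state $\tau$ and an integer $N$ exceeding its last $\times$-position. With $M$ uniform on $b\times N$ matrices over $\FF_q$ and $\vec c$ independent and uniform on $\FF_q^b$, the matrix $L=[\vec c\; M]$ is uniform on $b\times(N+1)$ matrices, so Proposition~\ref{prop:Gcount} gives $\Pr(\sigma(L)=\tau)=\pi(\tau)$. Conditioning this identity on $\sigma(M)$ then splits it in two: a \emph{full-rank part}, equal by Propositions~\ref{prop:Gtrans} and~\ref{prop:Gcount} to $\sum_\sigma \pi(\sigma)\,P(\sigma\to\tau)$ truncated to successors $\sigma$ of $\tau$ in the juggling digraph whose support lies in $[0,N-1]$; and a \emph{rank-deficient part}, crudely bounded by $\Pr(\mathrm{rank}\,M<b)=1-\prod_{i=0}^{b-1}(1-q^{i-N})$.

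Letting $N\to\infty$, the truncated full-rank sum grows monotonically to $(\pi P)(\tau)$---every fixed successor of $\tau$ eventually lies within the cutoff---and the rank-deficient remainder vanishes, yielding $\pi(\tau)=(\pi P)(\tau)$, which is the desired stationarity.

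The step I expect to demand the most care is the rank-deficient contribution: when $\tau$ begins with $\times$, a rank-$(b-1)$ matrix $M$ with a $\vec c$ outside its column span really does produce $\sigma(L)=\tau$, so this term is strictly positive at finite $N$ and disappears only in the limit, by the tail bound above. An alternative route that avoids any limiting argument is a direct verification: the unique successor of a $\tau$ beginning with $-$ is $\tau$ shifted left, with $\ell$ dropping by exactly $b$ and the check immediate; the successors $\tau'_j$ of a $\tau$ beginning with $\times$ with positions $\{0,p_1,\ldots,p_{b-1}\}$ are indexed by $j\notin\{p_1{-}1,\ldots,p_{b-1}{-}1\}$ with $\ell(\tau'_j)=\ell(\tau)+j-(b-1)$, and $\sum_j \pi(\tau'_j)\,P(\tau'_j\to\tau)/\pi(\tau)$ collapses to a geometric series in $q^{-1}$ summing to $1$.
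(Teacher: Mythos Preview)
Your proposal is correct. The paper actually takes your ``alternative route''---the direct verification---and carries it out in detail: the $-$-initial case is immediate, and for $\tau$ beginning with $\times$ the paper groups successors by how many $\times$s the throw passes over, obtaining a doubly telescoping sum. Your primary approach, the limiting argument in $N$ via Propositions~\ref{prop:Gtrans} and~\ref{prop:Gcount}, is precisely the route the paper alludes to just before its proof (``We could likely justify this for $q$ a prime power through some limiting procedure in $N$'') but declines to carry out, preferring a check valid for formal $q$. Your execution of that limiting argument is sound; one small point is that Proposition~\ref{prop:Gtrans} is stated for $M$ with infinitely many columns, but its proof uses only the pivot columns of $M$ and so applies verbatim to finite full-rank $M$. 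The trade-off: the limiting argument explains \emph{why} stationarity holds (it is inherited from translation-invariance of counting measure on matrices) but is restricted to prime-power $q$; the direct check works for formal $q$ and exhibits the cancellation explicitly. In your sketch of the direct check, note that $P(\tau'_j\to\tau)$ depends on how many $\times$s lie left of position $j$ in $\tau'_j$, not just on $j$, so the sum is not a single geometric series but a concatenation of finite geometric pieces that telescope---this is exactly what the paper's computation displays.
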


We could likely justify this for $q$ a prime power through some 
limiting procedure in $N$, but it's easy enough to check for 
a formal variable $q$ (i.e. $q^{-\infty} = 0$), so we do that now.

\begin{proof}
  Stationarity at $\tau$ says
  $$ \sum_{\tau \to \tau'} p(\tau') p(\tau',\tau) = p(\tau) $$
  (here $\to$ indicates the edge in the usual juggling digraph,
  whereas $p(\tau',\tau)$ is the transition probability calculated in
  proposition \ref{prop:Gtrans}). If $\tau$ begins with $-$, 
  then the only $\tau'$ is $\tau$ with that $-$ removed, 
  and stationarity says
  \begin{eqnarray*}
     p(\tau') q^{-b} &=& p(\tau) \\
     \prod_{i=1}^b (1-q^{-i})^{-1}\ q^{-\ell(\tau')} q^{-b} 
     &=& \prod_{i=1}^b (1-q^{-i})^{-1}\ q^{-\ell(\tau)}
  \end{eqnarray*}
  or $\ell(\tau')+b = \ell(\tau)$, which is obvious from the definition 
  of $\ell$.

  If $\tau$ begins with $\times$ (i.e. we have a ball to throw), then
  there are infinitely many $\tau'$ it could throw to. We group
  $\tau$'s $-$s into $b$ many groups $j \in [1,b]$, where $j$ is the
  number of $\times$ in $\tau$ that the throw skips past (counting
  itself, hence $j\geq 1$).  Let $\lambda_j$ be the position of the
  $j$th $\times$ in $\tau$, with $\lambda_1 = 0$, $\lambda_{b+1} := \infty$. Then
  \begin{eqnarray*}
    \prod_{i=1}^b (1-q^{-i})^{-1} \sum_{\tau \to \tau'} p(\tau') p(\tau',\tau) 
    &=& \sum_{\tau \to \tau'} q^{-\ell(\tau')} p(\tau',\tau) 
    \ =\ \sum_{j=1}^b \sum_{t \in (\lambda_{j},\lambda_{j+1}) \atop
      \tau' := \tau\text{ after $t$-throw}} q^{-\ell(\tau')} p(\tau',\tau) \\
    &=& \sum_{j=1}^{b} \sum_{t \in (\lambda_{j},\lambda_{j+1}) \atop
      \tau' := \tau\text{ after $t$-throw}}  q^{-\ell(\tau')} q^{j-b} (1 - q^{-1}) \\
    &=& q^{-b} \sum_{j=1}^{b} q^j (1 - q^{-1})
    \sum_{t \in (\lambda_{j},\lambda_{j+1}) \atop
      \tau' := \tau\text{ after $t$-throw}}     q^{-\ell(\tau')} \\
  \end{eqnarray*}
  To make $\tau'$, we remove the $\times$ from the front of $\tau$
  (preserving $\ell$), and put it in position $t-1 \in \naturals$, 
  which places it right of $t-1$ other letters of which $j-1$ are $\times$.
  That creates $t-j$ of the $-\times$ inversions to its left, 
  while destroying $b-j$ inversions from its right:
  \begin{eqnarray*}
    q^{-b} \sum_{j=1}^{b} q^j
    (1 - q^{-1}) \sum_{t \in (\lambda_{j},\lambda_{j+1}) \atop
      \tau' := \tau\text{ after $t$-throw}}     q^{-\ell(\tau')} 
    &=& q^{-b} \sum_{j=1}^{b} q^j (1 - q^{-1}) 
    \sum_{t \in (\lambda_{j},\lambda_{j+1}) \atop
      \tau' := \tau\text{ after $t$-throw}}     q^{-(\ell(\tau)+(t-j)-(b-j))} \\    
    &=&  q^{-\ell(\tau)} \sum_{j=1}^{b} q^j (1 - q^{-1}) 
    \sum_{-t \in (-\lambda_{j+1},-\lambda_{j}) }     q^{-t} \\    
\text{which telescopes as}
    &=&  q^{-\ell(\tau)} \sum_{j=1}^{b} q^j (q^{-\lambda_{j}-1} - q^{-\lambda_{j+1}}) \\
\text{which in turn telescopes as}
    &=&  q^{-\ell(\tau)} (q^1 q^{-\lambda_1-1} - q^b q^{-\lambda_{b+1}}) \\
    &=& q^{-\ell(\tau)} (q^1 q^{-0-1} - q^b q^{-\infty}) \qquad = q^{-\ell(\tau)}.
  \end{eqnarray*}
  \vskip -.3in
\end{proof}

\emph{Two comments.}
Another way we could have made rigorous the probability measure 
on matrices, and then pushed it down to the set of states, would
be to group matrices into equivalence classes where $M \sim M'$ if
they have the same pivot columns, and agree in the columns
up to and including their $b$th pivot column. 

Also, instead of working
with full-rank matrices we could have worked with all matrices,
allowing $b' < b$ pivot columns. On the level of juggling states,
this amounts to having the remaining $b-b'$ many $\times$s sitting in abeyance
at the (infinite) right end of the state; in short order those $\times$s
move to finite positions and never go back. This larger Markov chain
is not ergodic, and the $b'<b$ states have $0$ probability, so we just
left them out for ease of exposition.

We record for later use this function
$$ s_n = s_n(q) := \prod_{i=1}^n (1-q^{-i}) $$
whose reciprocal is $\sum_{\text{$n$-state }\tau} q^{-\ell(\tau)}$,
and connect it to other well-known Poincar\'e series:

\begin{Proposition}\label{prop:series}
  The Poincar\'e series of the flag manifold $Fl(n)$, 
  in the nontraditional variable $q^{-1}$, is 
  ${s_n} / {s_1^n} = \sum_{\pi \in S_n} q^{-\ell(\pi)}$. 
  The Poincar\'e series of the Grassmannian $Gr(j,h)$ 
  is ${s_h} / {s_j s_{h-j}} = \sum_{\sigma \in {h\choose j}} q^{-\ell(\sigma)}$.
  The second sum is over $j$-ball juggling states with no $\times$s in 
  position $h$ or later.
\end{Proposition}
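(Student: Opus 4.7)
The plan is to treat the two identities as two classical $q$-analog identities (for the $q$-binomial and $q$-factorial), and then separately observe that these match the Poincar\'e series of $Gr(j,h)$ and $Fl(n)$. Since both identities are well known, most of the work will be in choosing the right bijection and keeping the variable convention $q^{-1}$ (rather than $q$) straight.

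First I would handle the Grassmannian formula. A $j$-ball juggling state $\sigma$ with no $\times$ in position $\geq h$ is the same data as a $j$-element subset of $\{0,1,\ldots,h-1\}$, giving $\binom{h}{j}$ such $\sigma$. Since positions $\geq h$ are all $-$ with no $\times$s to their right, $\ell(\sigma)$ only counts pairs within positions $< h$. Under the standard bijection between $j$-subsets of $[h]$ and Young diagrams in a $j\times(h-j)$ rectangle, $\ell(\sigma)$ corresponds precisely to the area $|\lambda|$, so
\[ \sum_{\sigma} q^{-\ell(\sigma)} \;=\; \sum_{\lambda \subseteq j\times(h-j)} q^{-|\lambda|} \;=\; \frac{s_h}{s_j s_{h-j}}, \]
the last equality being the classical generating function for partitions in a box (the Gaussian binomial coefficient in variable $q^{-1}$).

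Next I would do the flag case by recognizing $\sum_{\pi\in S_n} q^{-\ell(\pi)}$ as the $q$-factorial $[n]_{q^{-1}}!$. One can prove $[n]_{q^{-1}}! = s_n/s_1^n$ by induction on $n$, using the fact that inserting $n$ into any of the $n$ positions of a permutation of $S_{n-1}$ contributes a factor $1 + q^{-1} + \cdots + q^{-(n-1)} = (1-q^{-n})/(1-q^{-1})$.

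Finally, the Poincar\'e-series interpretation comes from the standard Schubert/Bruhat cell decompositions of $Fl(n)$ and $Gr(j,h)$ over $\complexes$: cells are indexed by $S_n$ and by $j$-subsets of $[h]$ respectively, with complex dimensions given by $\ell$ in each case. Crucially, this $\ell$ is the same as ours: the proof of Proposition \ref{prop:Gcount} already identifies $\ell(\sigma)$ with the number of free entries in reduced row-echelon form, i.e.\ the complex dimension of the Schubert stratum. Under the substitution $q^{-1} = t^2$ the generating function becomes $\sum t^{2\ell}$, as desired. I do not foresee a real obstacle here; the main bookkeeping task is simply keeping the conventions $q$ vs $q^{-1}$ and $\complexes$ vs $\FF$ straight.
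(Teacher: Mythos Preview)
Your argument is correct. Note, however, that the paper does not actually supply a proof of this proposition: it is stated without proof and treated as a well-known collection of facts about $q$-factorials, Gaussian binomial coefficients, and Bruhat decompositions. Your write-up is exactly the standard justification one would give, and in particular your observation that the $\ell$ appearing here agrees with the Schubert-cell dimension via the free-entry count in Proposition~\ref{prop:Gcount} is the right link back to the paper's conventions.
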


\section{The $b\to\infty$ limit, with fixed 
  probability of placing an initial ``$-$''}\label{sec:stats}

How many $\times$s are we most likely to have in the first $h$ spots? 

The trick we use to measure such states is to look 
at concatenations $\sigma_L \sigma_R$ where $\sigma_L$ is a finite
string with multiplicities $\times^c -^{h-c}$, 
and $\sigma_R$ an infinite one with multiplicities $\times^{b-c} -^\infty$.
Then 
$$ \ell(\sigma_L \sigma_R) = \ell(\sigma_L)+\ell(\sigma_R)+(b-c)(h-c) $$
as the third term counts the inversions of the $(h-c)$ $-$s in
$\sigma_L$ with the $(b-c)$ $\times$s in $\sigma_R$.

Using this and proposition \ref{prop:series}, we calculate
the probability $P_c$ of having exactly \break 
$c \in [0,\min(h,b)]$ $\times$s in $[0,h-1]$ as
\begin{eqnarray*}
  \sum_{\sigma_L \in {[0,h-1]\choose c}} \sum_{\sigma_R}
  s_b q^{-\ell(\sigma_L)-\ell(\sigma_R)-(b-c)(h-c)}
  &=&
  s_b q^{-(b-c)(h-c)}
  \sum_{\sigma_L \in {[0,h-1]\choose c}} q^{-\ell(\sigma_L)} \sum_{\sigma_R} q^{-\ell(\sigma_R)} \\
  &=&
  s_b q^{-(b-c)(h-c)} \frac{s_h}{s_c s_{h-c}} \frac{1}{s_{b-c}}
\end{eqnarray*}
which is maximized at the $c$ where $P_c/P_{c-1}$ crosses from $>1$ to $<1$.
That ratio is
\begin{eqnarray*}
  P_c / P_{c-1} &=& \frac{q^{-(b-c)(h-c)}}{q^{-(b-c+1)(h-c+1)}} 
  \frac{s_{c-1}}{s_c} \frac{s_{h-c+1}}{s_{h-c}} \frac{s_{b-c+1}}{s_{b-c}} \\ \\
  &=& q^{-c+h+b-c+1} (1-q^{-c})^{-1} (1-q^{-(h-c+1)}) (1-q^{-(b-c+1)}) \\ 
  &=&  (q^{c}-1)^{-1} (q^h-q^{c-1}) (q^{b-c+1}-1) 
\end{eqnarray*}
and setting it to $1$ gives
\begin{eqnarray*}
  q^{c}-1 &=& (q^h-q^{c-1}) (q^{b-c+1}-1) \\
  \left( q^{c}-1 \right) \big/ \left( q^{b-c+1}-1 \right) &=& q^h-q^{c-1} \\
  q^{c-1} + \frac{q^{c}-1}{q^{b-c+1}-1} &=& q^h 
\end{eqnarray*}
Toward considering the $b\to \infty$ limit, 
let $\lambda := c/b \in [0,1]$, $\mu := h/b \in [\lambda,\infty)$, 
and $E = q^{-b}$:
$$ q^{-1} E^{-\lambda} + \frac{E^{-\lambda}-1}{q E^{\lambda-1} - 1} = E^{-\mu} $$

This $E$ (for ``empty hand'') is the probability of never flipping tails,
thereby putting a $-$ at the front of the state.
The limit $q\to 1, E\to 1$ of backwards juggling is thus not interesting.
Instead, we consider simultaneous limits $q\to 1,b\to \infty$
in such a way that $E$ has a limit in $(0,1)$, e.g. $q = 1 - 1/b$. 
Then
\begin{eqnarray*}
  E^{-\mu} &=& E^{-\lambda} + \frac{E^{-\lambda}-1}{E^{\lambda-1} - 1} 
  = E^{-\lambda} \left(1 + \frac{1-E^{\lambda}}{E^{\lambda-1} - 1}\right) 
  = E^{-\lambda}\   \frac{E^{\lambda-1}-E^{\lambda}}{E^{\lambda-1} - 1} 
  = E^{-\lambda}\   \frac{1-E}{1-E^{1-\lambda}} \\
  \mu &=& \lambda + \log_{E^{-1}} \frac{1-E}{1-E^{1-\lambda}} 
  \qquad\qquad \text{note }E^{-1} > 1 > E^{1-\lambda} > E > 0,
  \text{ so }\mu>\lambda   \\
  &\sim& \lambda + \frac{E^{1-\lambda}}{\log(E^{-1})} \quad \text{as }E\to 0,
  \lambda\text{ fixed, or} \qquad 
  \frac{\log(1-\lambda)^{-1}}{1-E}\quad\text{as }E\to 1,\lambda\text{ fixed}
\end{eqnarray*}
To recap: if we consider the limit $b\to\infty$ of many balls,
and don't control $E$, then in the $E\to 0$ limit we get $\mu = \lambda$,
the ground state. The limit $E\to 1$ doesn't exist. But if $E \in (0,1)$,
then the function $\mu(\lambda)$ above says how far out (as a multiple
of $b$) one should look to find the first $\lambda$ balls (as a
fraction of $b$). 

We can invert this relation to find $\lambda$ in terms of $\mu$:
\begin{eqnarray*}
  E^{-\mu} (1 - E^{1-\lambda}) &=& E^{-\lambda} (1-E) \\
  E^{-\mu} - E^{1-\mu} E^{-\lambda} &=& E^{-\lambda} (1-E) \\
  E^{-\mu}  &=& E^{-\lambda} (1-E + E^{1-\mu}) \\
  E^{-\mu} (1-E + E^{1-\mu})^{-1} &=& E^{-\lambda}  \\
  \mu - \log_{E^{-1}} (1+ E^{1-\mu}-E ) &=& \lambda
  \qquad\qquad \text{note }E^{1-\mu} > E, \text{ so } \lambda<\mu
\end{eqnarray*}
For example, as $E\to 0$ the fraction $\lambda$ of balls in 
the first $b$ slots is $1 - \log_{E^{-1}}(2-E) \sim 1 - \ln 2 / ln (E^{-1})$,
i.e. all but $\ln 2/\ln(E^{-1})$.
\junk{
Whereas for $E\to 1$, the fraction is
\begin{eqnarray*}
   1 - \frac{\log (1+ E^0-E )}{\log (E^{-1})} 
   = 1 +  \frac{\log (2-E )}{\log E} 
   \sim 1 +  \frac{-1/(2-E)}{1/E} = \frac{(2-E)+E}{2-E} 
   = \frac{1}{1-E/2} \sim 1-E/2
\end{eqnarray*}
\begin{eqnarray*}
  1 + \frac{\log(1+\varepsilon)}{\log (1-\varepsilon)} 
  \sim 1 + \frac{1/(1+\varepsilon)}{-1/(1-\varepsilon)} 
  = 1 - \frac{1-\varepsilon}{1+\varepsilon} 
\end{eqnarray*}

Whereas for $E = 1-\varepsilon$, the fraction is 
\begin{eqnarray*}
 1 - \frac{\log(2-(1-\varepsilon))}{\log (1-\varepsilon)^{-1}}
 &=& 1 + \frac{\log(1+\varepsilon)}{\log (1-\varepsilon)} 
 \sim 1 + \frac{\varepsilon - \varepsilon^2/2 + \ldots}
 {-\varepsilon - \varepsilon^2/2 + \ldots} 
 = 1 + \frac{1 - \varepsilon/2 + \ldots}  {-1 - \varepsilon/2 + \ldots} \\
 &=& 1 - \frac{1 - \varepsilon/2 + \ldots}  {1 + \varepsilon/2 + \ldots} 
 \sim 1 + (1-\varepsilon) = \varepsilon
\end{eqnarray*}
}
\junk{
If only $\varepsilon$ of the balls are in $[0,1]$ (in units of $b$),
it suggests we ask what fraction of the balls are 
in the range $[0,\mu = 1/\varepsilon]$.
\begin{eqnarray*}
  \frac{1}{\varepsilon} 
  - \frac{\log(1 + (1-\varepsilon)^{1-1/\varepsilon} - (1-\varepsilon))}
  {\log(1+\varepsilon)}
  &=&    \frac{1}{\varepsilon} 
  - \frac{\log((1-\varepsilon) (1-\varepsilon)^{-1/\varepsilon} + \varepsilon)}
  {\log(1+\varepsilon)} \\
  &\sim& \frac{1}{\varepsilon} 
  - \frac{\log(e (1-\varepsilon) + \varepsilon)}
  {\varepsilon} \\
\end{eqnarray*}
}

\newcommand\ddmu{\frac{d}{d\mu}}
The {\em ball density} at $\mu$ is the derivative of this $\lambda(\mu)$ 
w.r.t. $\mu$,
\begin{eqnarray*}
  1 - \frac{\ddmu \log(1+E^{1-\mu}-E)} {\log(E^{-1})}
  &=& 1 - \frac{\ddmu (1+E^{1-\mu}-E)} {\log(E^{-1}) (1+E^{1-\mu}-E)} 
  = 1 - \frac{\ddmu (E^{1-\mu})} {\log(E^{-1}) (1+E^{1-\mu}-E)} \\
  &=& 1 - \frac{\log(E^{-1}) (E^{1-\mu})} {\log(E^{-1}) (1+E^{1-\mu}-E)} 
  = 1 - \frac{E^{1-\mu}} { 1+E^{1-\mu}-E} \\
  &=& \frac{1-E} { 1+E^{1-\mu}-E} 
\end{eqnarray*}
which is $1-E$ at $\mu=0$ (as befits the definition of $E$)
and decreases thereafter. 
As another sanity check, consider the limit $E\to 0$ with $\mu$ fixed:
for $\mu<1$ we get $\frac{1-0}{1+0-0} = 1$, 
whereas for $\mu>1$ we get $\frac{1-0}{1+\infty-0} = 0$.
See figure \ref{fig:density} on p\pageref{fig:density}.

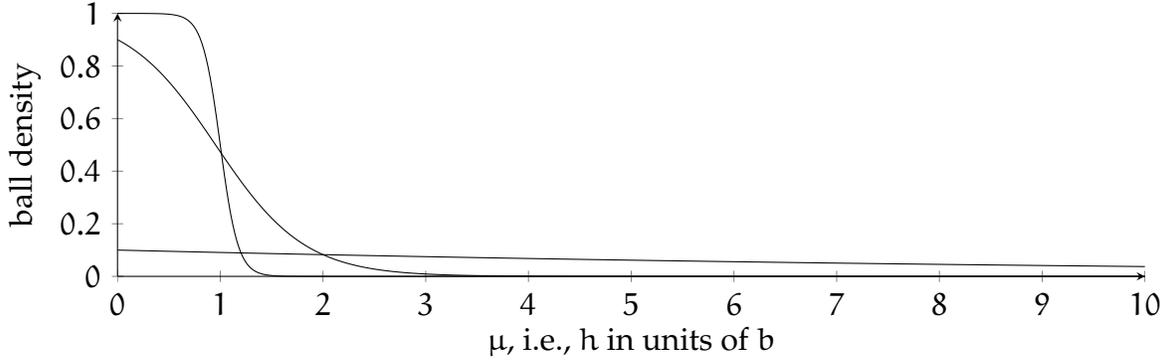
\begin{figure}[htbp]
  \centering
  \begin{tikzpicture}
    \begin{axis}[domain = 0:6, width = 6in, height = 2in, samples = 500,
      axis x line = bottom,
      axis y line = left,
      ylabel = ball density, xlabel = {$\mu$, i.e., $h$ in units of $b$}  ] 
      \addplot[domain=0:10,]  {(1-0.00001)/(1 + (.00001)^(1-x) - .00001)};
      \addplot[domain=0:10,]  {(1-0.1)/(1 + (.1)^(1-x) - .1)};
      \addplot[domain=0:10,]  {(1-0.9)/(1 + (.9)^(1-x) - .9)};
    \end{axis}
  \end{tikzpicture}
  \caption{The ball density functions for $E = 0.00001$ (the sigmoidal curve),
    $E = 0.9$ (the very flat one at the bottom), and $E=0.1$ in between;
    each $y$-intercept is $1-E$.
    Recall that the fraction in the tail $\mu\geq 1$ is about
    $\ln 2/\ln(E^{-1})$, rather a lot, which is why we need $E$ so very small 
    to get a sigmoidal-looking curve.}
  \label{fig:density}
\end{figure}

\junk{
In the $E\to 1$ limit with $\lambda$ fixed, the balls spread out to
width $\propto \frac{1}{1-E}$. We can thus study the limit $E\to 1$
by letting $\nu = \frac{\mu}{1-E}$, $E = 1-\epsilon$, and 
looking at the ball density at $\nu$:
\begin{eqnarray*}
  \frac{1}{1-E} \frac{1-E}{1 - E + E^{1-(1-E)\nu} }
  &=& \frac{1}{\epsilon} \frac{\epsilon}{\epsilon + (1-\epsilon)^{1-\epsilon\nu}}\\
  &=& \frac{1}{\epsilon + (1-\epsilon(1-\epsilon\nu))} \\
  &\sim& \frac{1}{\epsilon + (1-\epsilon+\epsilon^2\nu))} \\
  &=& \frac{1}{ 1+\epsilon^2\nu} \\
\end{eqnarray*}
}

\junk{

If the $c$th ball is at position $h$, it contributes
$h-c-1 \sim b(\mu-\lambda)$ inversions to the total number. 
So (to some extent) we can compute $\frac{1}{b^2}\ell$ of these typical states:
\begin{eqnarray*}
  \frac{1}{b^2} \int_{c=0}^b b(\mu-\lambda) dc
  &=& \int_{\lambda=0}^1  \log_{E^{-1}} \frac{1-E}{1-E^{1-\lambda}} d\lambda 
  = \int_{\lambda=0}^1  \log_{E^{-1}} \frac{1-E}{1-E^{\lambda}} d\lambda \\
  &=& \frac{1}{\log(E^{-1})}
  \int_{\lambda=0}^1 \left( \log(1-E) - \log(1-E^{\lambda})\right) d\lambda \\
  &=& \frac{1}{\log(E^{-1})} 
  \left(\log(1-E) - \int_{\lambda=0}^1 \log(1-E^{\lambda}) d\lambda \right) \\
  &=& \log_{E^{-1}}(1-E) - \frac{Li_2(E)}{\log^2(E^{-1})} 
\end{eqnarray*}
where $Li_2(x) = \sum_{k>0} \frac{x^k}{k^2}$ is the dilogarithm.
}

\junk{
\begin{figure}[htbp]
  \centering
  \begin{tikzpicture}
    \begin{axis}[domain = 0:6, width = 6in, height = 2in, samples = 50,
      axis x line = bottom,
      axis y line = left,
      ylabel = ball density, xlabel = {$\mu$, i.e., $h$ in units of $b$}  ] 
      \addplot[domain=0:1,] 
      {(ln(1-x) - Li_2(x)/ln(x^(-1)))/ln(x^(-1))};
    \end{axis}
  \end{tikzpicture}
  \caption{fhdsk}
  \label{fig:typical}
\end{figure}
}

\section{Some richer linear algebra, and flag juggling}
\label{sec:richer}

As explained at the beginning of \S \ref{sec:motivation}, the function
$\sigma$ was the complete invariant for the group of row operations and 
rightward column operations. If we restrict to {\em downward} row operations, 
then we still get a discrete set of orbits (even for complex matrices); 
each orbit contains a unique partial permutation matrix of rank $b$.

Define a \defn{flag juggling state} as a juggling state where the
$\times$s have been replaced by the numbers $1,\ldots,b$,
each used exactly once. Then we have a unique map 
$\wt\sigma:\ U_b \to \{$flag juggling states$\}$
that takes a partial permutation matrix of rank $b$ with $m_{ij} = 1$
to a state with an $i$ in the $j$th position,
and such that $\sigma$ is invariant under 
downward row and rightward column operations.

To give the analogue of proposition \ref{prop:Gcount}
requires us to extend the definition of $\ell$ to flag juggling
patterns: it should also count any pair $\ldots i \ldots j \ldots$
with $i>j$ as an inversion, e.g. $\ell(-\ 3-1\ 2) = 7$.
It is then reasonable to consider ``$-$'' as $+\infty$ for this inversion count.

\begin{Proposition}\label{prop:Fcount}
  Let $\wt\tau$ be a flag juggling state, 
  and pick $W > $ the last $\times$-position of $\tau$. \\
  Then the fraction of $b\times W$ matrices with $\wt\sigma(M) = \wt\tau$ is
  $\left( 1 - q^{-1} \right)^b \ \big/\ q^{\ell(\wt\tau)}$, independent of $W$.
\end{Proposition}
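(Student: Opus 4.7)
The plan is to follow the template of proposition \ref{prop:Gcount}, but with $GL_b(\FF)$ replaced by the subgroup $B^-$ of invertible lower triangular matrices, which is exactly the group of invertible downward row operations (including the row rescalings needed to normalize pivots to $1$). Its order is $|B^-| = (q-1)^b\, q^{b(b-1)/2}$. The first step is to exhibit a ``flag-reduced'' normal form on each $B^-$-orbit, obtained by top-down elimination: for $i = 1, \ldots, b$ in turn, locate the leftmost nonzero entry in the current row $i$ at some column $j_i$, scale it to $1$, and use row $i$ to clear column $j_i$ in all later rows. The resulting pivot sequence $j_1, \ldots, j_b$ is exactly $\wt\sigma(M)$, with $i$ placed at position $j_i$, and the normal form is unique on its orbit by a triangular argument reading rows $1, 2, \ldots, b$ one at a time.

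Counting the normal-form matrices with a prescribed $\wt\tau$ is then direct: row $i$ has a $1$ at column $j_i$, a $0$ at every column $< j_i$, a $0$ at each column $j_k$ with $k < i$ (cleared by row $k$), and a free entry everywhere else. Writing $F(\wt\tau)$ for the total number of free entries, the count of $b \times W$ matrices $M$ with $\wt\sigma(M) = \wt\tau$ is $|B^-| \cdot q^{F(\wt\tau)}$, so the fraction is
$$ \frac{(q-1)^b\, q^{b(b-1)/2 + F(\wt\tau)}}{q^{bW}}. $$
Reducing this to $(1 - q^{-1})^b/q^{\ell(\wt\tau)}$ amounts to the identity
$$ F(\wt\tau) \;=\; b(W-1) - \tfrac{b(b-1)}{2} - \ell(\wt\tau). $$

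This identity is where the main combinatorial content lies, and is the step I expect will take the most care. I would prove it by splitting $\ell(\wt\tau)$ into (number, number) inversions and $(-, \text{number})$ inversions. The first kind counts pairs $(k, i)$ with $k < i$ and $j_k > j_i$, which, summed over $i$, is precisely the total number of ``forced zero'' positions right of $j_i$ in row $i$. The second kind, for each $i$, counts the $-$'s strictly left of $j_i$; this equals $j_i - |\{i' : j_{i'} < j_i\}|$, and summing over $i$ together with the double-counting identity $\sum_i |\{i' : j_{i'} < j_i\}| = \binom{b}{2}$ produces the $\binom{b}{2}$ shift. Combining this with the row-by-row formula $F(\wt\tau) = \sum_i \bigl( (W - 1 - j_i) - |\{k < i : j_k > j_i\}|\bigr)$ yields the identity. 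Independence from $W$ is then automatic: appending a trailing $-$ to $\wt\tau$ adds $b$ to $F(\wt\tau)$ and leaves $\ell(\wt\tau)$ fixed, exactly matching the extra $q^b$ in the denominator $q^{bW}$.
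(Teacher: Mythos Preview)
Your argument is correct, and it takes a genuinely different route from the paper's. The paper computes the size of the single $B_- \times N_+$-orbit in $U_b$ with invariant $\wt\tau$ by orbit--stabilizer: it writes down $|B_-|\cdot|N_+|$, then identifies the stabilizer, whose nontrivial part comes from (i) the $N_+$-factor fixing columns at $-$-positions and (ii) the ``cross terms'' where a downward row operation is undone by a rightward column operation, one for each label--label inversion in $\wt\tau$. Your approach instead works purely with $B^-$: you pass to a top-down echelon normal form, count its free entries, and multiply by $|B^-|$. The inversions of $\wt\tau$ reappear not in a stabilizer but as forced zeros in the normal form, via the identity $F(\wt\tau)=b(W-1)-\binom{b}{2}-\ell(\wt\tau)$ that you verify by splitting $\ell$ into its two inversion types.

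What each buys: the paper's orbit--stabilizer computation is short and conceptually clean once one accepts the slightly delicate stabilizer analysis (the cancellation between row and column operations is the subtle step). Your version is more elementary and self-contained---no stabilizer subtleties, just an explicit echelon form and a bijective bookkeeping of entries---at the cost of the combinatorial identity you prove at the end. Both make the $W$-independence transparent.
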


\begin{proof}
%
  We're computing the size of the $B_- \times N_+$-orbit through the
  partial permutation matrix $M$ with $\wt\sigma(\pi) = \wt\tau$, 
  where $B_-$ is
  lower triangular $b\times b$ matrices and $N_+$ is upper triangular
  $W\times W$ matrices with $1$s on the diagonal. 

  The $N_+$-stabilizer of $M$ consists of matrices $R$ with $R_{ij} = 0$
  unless $\wt\tau$ has $-$ in its $i$th position. The $B_-$-stabilizer
  is trivial.
  However, the $(B_- \times N_+)$-stabilizer of $M$ is slightly larger
  than the product of the stabilizers; 
  some row operations can be canceled by some column operations, 
  one such pair for each inversion $\ldots i \ldots j \ldots$ with $i>j$.

  The order of $B_- \times N_+$ is $(q-1)^b q^{b\choose 2} q^{W\choose 2}$;
  dividing by the stabilizer order gives the size of the orbit,
  then by $q^{bW}$ gives the fraction claimed.
\end{proof}

\begin{Corollary}\label{cor:flagdist}
  The mapping 
  $\wt\tau \mapsto   \left( 1 - q^{-1} \right)^b \ \big/\ q^{\ell(\wt\tau)} $
  is a probability measure on the space of flag juggling states.  
\end{Corollary}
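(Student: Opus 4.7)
The plan is to imitate the proof of Corollary \ref{cor:basedist} verbatim, replacing Proposition \ref{prop:Gcount} with Proposition \ref{prop:Fcount}. Positivity of each term $(1-q^{-1})^b/q^{\ell(\wt\tau)}$ is immediate, so the only content is to show the sum over all flag juggling states $\wt\tau$ is $1$.

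First I would truncate: fix $W \in \naturals$ and restrict the sum to those flag juggling states $\wt\tau$ whose last $\times$-position is strictly less than $W$. By Proposition \ref{prop:Fcount}, each summand $(1-q^{-1})^b/q^{\ell(\wt\tau)}$ equals the fraction of $b\times W$ matrices $M$ over $\FF_q$ with $\wt\sigma(M) = \wt\tau$. Since $\wt\sigma$ is defined precisely on $U_b$ and any full-rank $b\times W$ matrix has all $b$ of its pivots in columns $[0,W-1]$, the truncated sum equals the fraction of $b\times W$ matrices that are full rank, namely $\prod_{i=0}^{b-1}(1-q^{i-W})$.

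Finally, let $W \to \infty$: the prefactor $\prod_{i=0}^{b-1}(1-q^{i-W})$ tends to $1$, and monotone convergence (or the obvious monotonicity in $W$) lets us identify the limit with the full sum. This gives $\sum_{\wt\tau}(1-q^{-1})^b/q^{\ell(\wt\tau)} = 1$.

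The only place that requires care, and the mild analogue of an ``obstacle,'' is checking that the truncation indeed captures all $\wt\tau$ supported in $[0,W-1]$ and nothing else, i.e. that the fibers of $\wt\sigma$ restricted to full-rank $b\times W$ matrices are exactly the orbits enumerated by flag juggling states of length $\leq W$. This is immediate from the definition of $\wt\sigma$ (each full-rank matrix produces a unique reduced partial-permutation representative via downward row and rightward column operations), so no genuine difficulty arises. As with Corollary \ref{cor:basedist}, one may alternatively read the identity as a formal-power-series identity in $q^{-1}$, where the displayed product expansion becomes a Poincar\'e-type series for a variant of the flag manifold.
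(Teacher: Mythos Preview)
Your argument is correct and is exactly the intended one: the paper gives no separate proof of this corollary, treating it as the direct analogue of Corollary~\ref{cor:basedist} with Proposition~\ref{prop:Fcount} in place of Proposition~\ref{prop:Gcount}. Your truncation-and-limit argument reproduces that proof verbatim.
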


{\em Side note.}
The corresponding equation $(1-q^{-1})^{-b} = \sum_{\wt\tau} q^{-\ell(\wt\tau)}$
gives two formulae for the Poincar\'e series of the manifold
$B_- \dom U_b$ of partial flags $(V^1 < V^2 < \ldots< V^b < \complexes^\naturals)$.
Since $B_- \dom U_b$ is a Leray-Hirsch-satisfying bundle over
$GL(b) \dom U_b$ with fiber $B_- \dom GL(b)$, the Poincar\'e series
$b_{B_- \dom U_b}$ of this bundle factors as
$$ b_{B_- \dom U_b} = b_{GL(b) \dom U_b}\ b_{B_- \dom GL(b)} $$
where
\begin{eqnarray*}
  b_{B_- \dom U_b} &=& (1-q^{-1})^{-b} = \sum_{\wt\tau} q^{-\ell(\wt\tau)} \\
  b_{GL(b) \dom U_b} &=& \prod_{i=1}^b (1-q^{-i})^{-1} = \sum_\tau q^{-\ell(\tau)} \\
  b_{B_- \dom GL(b)} &=& \prod_{i=1}^b \frac{1-q^{-i}}{1-q^{-1}} = \sum_{\pi \in S_n} q^{-\ell(\pi)} 
\end{eqnarray*}
the three sums on the right
being derivable from the respective Bruhat decompositions.

We now define the edges out of a flag juggling state $\wt\tau$,
again making the set of states into the vertices of a digraph
which appeared already in \cite{Corteel2}.
(As far as we can tell our motivations for studying this digraph
are different than theirs.)
A small example is shown in figure \ref{fig:flagdigraph}.

If $\wt\tau$ begins with $-$, then there is a unique outgoing edge,
to $\wt\tau$ with the $-$ removed. 
Otherwise we pick up the number that $\wt\tau$ starts with and
begin walking East.
\begin{enumerate}
\item At any $-$, we can replace the $-$ with the carried number and be done.
\item At any strictly larger number, we can pick up that larger number,
  drop the number we were carrying in its place, and go back to (1).
\end{enumerate}
Define the \defn{throw set} for the transition $\wt\tau \to \wt\tau'$
as the places a number is dropped.
Neither drop is required; we can continue walking East instead
(though not forever). Note that if we used the label $1$ $b$ times
instead of $[1,b]$ each once, then each throw set would be singleton,
and this would be the same digraph as in \S \ref{sec:intro}.

\begin{figure}[hbtp]
  \centering
  \epsfig{file=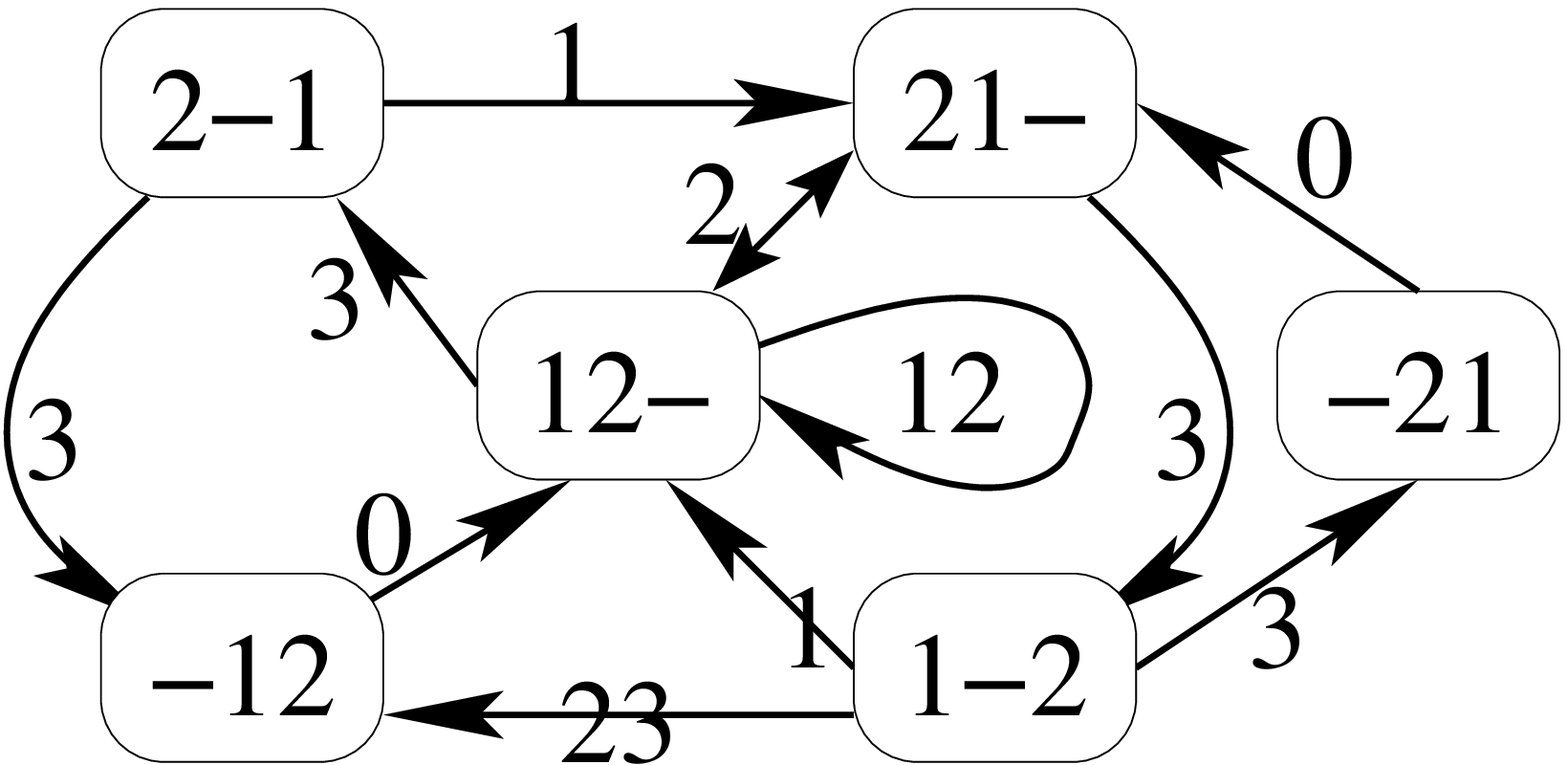, height=1.3in}
  \caption{The $b=2$ flag juggling digraph with 
    and only throws $\leq 3$ drawn.
  }
  \label{fig:flagdigraph}
\end{figure}

\subsection{Another Markov chain}

As in \S\ref{ssec:basechain}, we define a Markov chain following the
edges backwards in this digraph. Let $\wt\tau$ be a flag juggling state,
and again we use a coin with $p($heads$)=1/q$. 
\begin{enumerate}
\item Hold a $-$, and point at the rightmost number in $\wt\tau$.
\item Flip the coin. If tails, put down what we're holding and pick
  up the number we're pointing at. If heads, do nothing.
\item Move leftwards; stop when we meet
  a number smaller than what we're holding (interpreting $-$ as $+\infty$,
  jibing with our definition of $\ell(\wt\tau)$).
\item If we meet such a number, go back to (2). Otherwise we've fallen
  off the left end of (the now modified) $\wt\tau$; drop whatever 
  we're holding, there.        
\end{enumerate}

For example, start with $\wt\tau = - -\ 3\ 1 - 2$, holding a $-$,
pointing at the $2$.
\begin{itemize}
\item Tails: pick up the $2$, leaving the $-$ in its place. Point at the $1$.
  \begin{itemize}
  \item Tails: drop the $2$ for the $1$, then carried all the way left
    to give $1 - -\ 3\ 2 - -$.
  \item Heads: the $2$ gets carried all the way left
    to give $2 - -\ 3\ 1 - -$.
  \end{itemize}
\item Heads: skip the $2$ and point at the $1$.
  \begin{itemize}
  \item Tails: pick up the $1$, leaving the $-$, and carry the $1$ all
    the way left to give $1 - -\ 3- -\ 2$.
  \item Heads: leave the $1$ and proceed to the $3$.
    \begin{itemize}
    \item Tails: pick up the $3$ and carry it left to give $3 - - - 1 - 2$.
    \item Heads: leave the $3$ and drop the $-$ on the left, 
      giving $---\ 3\ 1-2$.
    \end{itemize}
  \end{itemize}
\end{itemize}
\begin{eqnarray*}
\text{In all, }
   \wt\tau &\mapsto& (1-q^{-1})^2[1--3\ 2] 
   \ +\  (1-q^{-1})q^{-1} \left( [2--3\ 1--] + [1--3--2] \right) \\
   && + q^{-2} (1-q^{-1}) [3---1-2] \ +\ q^{-3} [---3\ 1-2] 
\end{eqnarray*}

This is again motivated by the ``add a random column on the left''
Markov chain on $U_b$:

\begin{Proposition}
  Let $M$ be the partial permutation matrix in $U_b$ 
  with $\wt\sigma(M)=\wt\tau$, and $\vec c$ a random column vector. 
  Then the probability of $\wt\sigma([\vec c M])$ being a particular
  state $\tau'$ is the probability of reaching $\tau'$ in the
  process above.  

  In particular, the possible $\tau'$ are the ones such that 
  $\tau' \to \tau$ in the digraph defined in this section.
\end{Proposition}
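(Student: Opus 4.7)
I would adapt the proof of Proposition \ref{prop:Gtrans} to this flag setting.

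First, I would observe that $\wt\sigma([\vec c, M])$ depends only on the support $S = \{i : c_i \neq 0\}$ of $\vec c$: when the algorithm computing $\wt\sigma$ reduces $L = [\vec c, M]$ via $B_-$ row and $N_+$ column operations, each pivot is chosen as the topmost nonzero entry of the current column modulo previous pivots, and the intermediate expressions (ratios of the nonzero $c_i$'s) never accidentally cancel---so which rows appear as pivots depends only on $S$. Consequently, the LA probability that $\vec c$ has a prescribed support $S$ is $(q-1)^{|S|}/q^b$.

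Second, I would show that for each support $S$, $\wt\sigma([\vec c, M])$ agrees with the outcome of the MC process under the rule ``Tails at a visit of number $r$ iff $r \in S$''. The canonical form begins by pivoting column $0$ at $i^* := \min S$ (or placing $-$ if $S = \emptyset$), which matches the MC's final carry; a short argument shows the MC always picks up $i^*$, since the carry stays strictly above $i^*$ until $i^*$ is visited, which is unavoidable. After the column-$0$ pivot, row operations alter only one other column of $L$---the column at position $p_{i^*}+1$, where row $i^*$ originally pivoted in $M$---turning it into a vector supported on rows $>i^*$ with nonzero $c_k$. The algorithm then processes intermediate columns (which inherit $\wt\tau$'s structure) and reaches this modified column, where the next pivot in a ``displacement chain'' $i^* < i^{**} < \cdots$ is chosen. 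The chain corresponds to the MC's pickup sequence in reverse order, with the MC's rule ``skip numbers $\geq$ current carry'' matching the LA's cleanup step that subtracts contributions from pre-established pivots at intervening $\wt\tau$-positions. The equivalence of the two processes would be formalized by induction on the length of the chain.

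Third, I would match probabilities. For each MC trajectory with visit set $V$ and Tails subset $T \subseteq V$, the compatible $\vec c$'s are those with $c_r \neq 0$ for $r \in T$, $c_r = 0$ for $r \in V \setminus T$, and $c_r$ free for $r \notin V$. Their total fraction is $(q-1)^{|T|} q^{b-|V|}/q^b = (1 - q^{-1})^{|T|} q^{-(|V|-|T|)}$, exactly the MC probability of the trajectory. This simultaneously proves the ``in particular'' clause, since every realized $\tau'$ arises as an MC outcome and hence satisfies $\tau' \to \wt\tau$.

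The main obstacle is the second step: carefully bookkeeping the displacement chain. This involves tracking which rows have been pivoted at each stage, verifying that the ``modified column'' produced at each step truly reflects $\vec c$ restricted to the remaining unpivoted rows, and checking that the LA's pivot choice matches the MC's pickup/skip rule throughout the chain.
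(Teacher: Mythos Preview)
Your proposal is correct and follows the same approach as the paper's own proof, which is only a two-line sketch (``rightward column-reduction of $[\vec c\ M]$ corresponds to doing the coin flips, in reverse order; at each coin-flipping step, we determine that a certain entry of $\vec c$ is zero (heads) or nonzero (tails)'') with the details explicitly left to the reader. Your three-step outline---the support of $\vec c$ determines $\wt\sigma([\vec c\,M])$, the displacement chain $i^* < i^{**} < \cdots$ matches the Markov chain's pickup sequence in reverse, and the unvisited entries are free---is precisely the detail-filling the paper invites, and your probability bookkeeping via $(V,T)$ is a clean way to formalize the paper's one-entry-per-flip heuristic.
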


\begin{proof}[Proof sketch]
  Rightward column-reduction of $[\vec c\ M]$ corresponds to doing 
  the coin flips, in reverse order.
  At each coin-flipping step, we determine that a certain entry of
  $\vec c$ is zero (heads) or nonzero (tails). 
  We leave the details to the reader.
\end{proof}

We have the corresponding theorem, but skip the 
corresponding formal derivation:

\begin{Theorem}
  The vector $\wt\tau \mapsto (1-q^{-1})^b q^{-\ell(\wt\tau)}$ 
  is the stationary distribution of the Markov chain (1)-(4) defined above.
\end{Theorem}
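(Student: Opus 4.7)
The plan is to imitate the proof of Theorem~\ref{thm:basechain}. The stationarity equation at $\wt\tau$ reads
$$ \sum_{\wt\tau \to \wt\tau'} p(\wt\tau')\, p(\wt\tau', \wt\tau) \ =\ p(\wt\tau), $$
where the sum is over forward flag-digraph edges out of $\wt\tau$ and $p(\wt\tau', \wt\tau)$ is the Markov chain transition probability (the process sending $\wt\tau'$ to $\wt\tau$). Pulling out the uniform prefactor $(1-q^{-1})^b$ reduces the check to $\sum q^{-\ell(\wt\tau')}\, p(\wt\tau', \wt\tau) = q^{-\ell(\wt\tau)}$, and I would then split into two cases by the first letter of $\wt\tau$.

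If $\wt\tau$ begins with $-$, the unique outgoing edge in the flag digraph is to $\wt\tau' = \wt\tau$ with its leading $-$ removed, and the backward transition probability is $q^{-b}$: while holding $-$ (interpreted as $+\infty$) every one of the $b$ numbers in $\wt\tau'$ is a meeting, and all $b$ coin flips must come up heads. The identity $\ell(\wt\tau) = \ell(\wt\tau')+b$ is immediate, since prepending $-$ creates one new inversion with each of the $b$ numbers (and none with any other $-$).

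If $\wt\tau$ begins with a number $i_1$, I would parametrise the outgoing edges by the forward walk: a strictly increasing swap chain $i_1 < i_2 < \cdots < i_r$ (with $r \geq 1$) together with positions $0 < p_1 < \cdots < p_r$ so that $\wt\tau[p_k] = i_{k+1}$ for $k < r$ and $\wt\tau[p_r] = -$. The corresponding $\wt\tau'$ is $\wt\tau$ shifted left by one, with $i_k$ installed at position $p_k - 1$ for each $k = 1, \ldots, r$. The transition probability $p(\wt\tau', \wt\tau)$ factors as $(1-q^{-1})^r\, q^{-H}$, where the $r$ tails correspond to the $r$ swaps and $H$ counts the heads needed to skip every other meetable letter (a letter smaller than whichever of $-, i_r, i_{r-1}, \ldots, i_1$ is being held as the process passes it). One then collapses the sum by a nested telescope: fixing $i_1, \ldots, i_{r-1}$ and $p_1, \ldots, p_{r-1}$ and summing over the final drop position $p_r$ should yield a geometric tail telescoping just like the $\sum_{t \in (\lambda_j, \lambda_{j+1})}$ step of Theorem~\ref{thm:basechain}; successive outer sums over the earlier chain data should cascade similarly, leaving only the boundary term $q^{-\ell(\wt\tau)}$.

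The main obstacle is keeping straight how $\ell$ changes across a swap: when $i_k$ moves left past the letters of $\wt\tau'$ lying between positions $p_{k-1}-1$ and $p_k - 1$, it contributes signed inversion changes depending on whether each passed-over letter is a number smaller or larger than $i_k$, or a $-$. Verifying that these signed counts, weighted against the $(1-q^{-1})$- and $q^{-1}$-factors supplied by the coin, recombine to produce exactly the right cancellations is the nontrivial combinatorial content. If this direct route becomes cumbersome, the cleanest alternative (in the spirit of Proposition~\ref{prop:Fcount}) is to argue stationarity by descent from $U_b$: the Markov chain is the pushforward of ``add a random column on the left,'' which trivially preserves counting measure on $U_b$, and the claimed distribution is exactly the pushforward identified in Proposition~\ref{prop:Fcount}, so stationarity follows formally from the $N \to \infty$ limit, as in Corollary~\ref{cor:basedist}.
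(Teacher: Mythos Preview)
The paper does not actually prove this theorem: it states ``We have the corresponding theorem, but skip the corresponding formal derivation,'' and leaves it at that. So there is no proof in the paper to compare against beyond the implied analogy with Theorem~\ref{thm:basechain} and the surrounding linear-algebra framework.

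Your primary plan---imitate the proof of Theorem~\ref{thm:basechain}, split on the first letter of $\wt\tau$, and in the numbered case parametrise the outgoing edges by the bump chain $i_1<i_2<\cdots<i_r$ and drop positions---is exactly the ``corresponding formal derivation'' the paper gestures at. Your treatment of the $-$-initial case is correct as written. In the numbered case your factorisation $p(\wt\tau',\wt\tau)=(1-q^{-1})^r q^{-H}$ and the nested-telescope strategy are the right shape, and you are honest that the signed inversion bookkeeping across bumps is where the work lies; you have not actually carried that computation out, so as it stands this is a correct outline rather than a complete proof.

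Your alternative route---push counting measure on $U_b$ forward through $\wt\sigma$ using Proposition~\ref{prop:Fcount} and the preceding proposition identifying the chain as ``add a random column on the left''---is also sound and is arguably closer to how the paper justifies the result informally, since the whole section is built around that linear-algebra model. Either approach is acceptable; the second avoids the combinatorics you flagged as the main obstacle, at the cost of the same $N\to\infty$ limiting care needed for Corollary~\ref{cor:basedist}.
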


\subsection{The linear algebra of repeated labels}

Given a finite multiset $S$ of numbers, we can redefine \defn{flag juggling
  states} to bear those labels, and the Markov chain in this section
extends without changing a word. If the elements of $S$ are all equal, or all
different, we get the digraphs from \S \ref{sec:intro} and \S \ref{sec:richer}
respectively. So one can ask for a corresponding linear algebra problem
and, hopefully thereby, calculation of the stationary distribution.

To interpolate between {\em all} row operations and
{\em downward} row operations, we consider the rows as coming in
contiguous groups, and only allow row operations within a group or
downward. For example if $S = \{1,1,5,7,7,7\}$, then we have three
groups, which are two rows above one row above three rows.

In the analogue of reduced row-echelon form, the $b$ pivot columns are
still arbitrary, but the pivots within a given group run Northwest/Southeast.
In the analogue of corollaries \ref{cor:basedist} and \ref{cor:flagdist},
the prefactor is
$$ \prod^\text{groups} \prod_{i=1}^\text{group size} (1-q^{-i}) $$
which computes the probability that a {\em block} lower triangular matrix
is invertible. These prefactors, times $q^{-\ell(\wt\tau)}$, 
again give the stationary distribution.

\section{Drawing out the process}\label{sec:hats}

Since the transitions in the (flag) juggling Markov chain involve 
repeated flipping of coins, it seems natural to ask for an
alternative version with intermediate states, such that only one
coin is flipped at each transition.

\subsection{The digraph of ordinary and hatted states}

\newcommand\qtoq{\quad\to\quad}

Define a \defn{hatted flag juggling state} $\tau_+$
as a flag juggling state $\tau$ with one position hatted,
as in $\hat 5$ or $\hat -$. The hat is not allowed over one of the $-$s
occurring after $\tau$'s last number, e.g. $3 - 4 - -\ldots$ can
only be hatted as $\hat 3 - 4$, $3 \hat- 4$, or $3 - \hat 4$.

The vertices of the digraph will be the usual unaugmented states, 
plus these new, \break intermediate, states. Make directed edges as follows:
\begin{itemize}
\item If $\tau$ is unhatted, then it has only one arrow out, hatting 
  the $0$th label. \\ Example: $\qquad 3 -\ 2\ 1 \qtoq \hat 3 -\ 2\ 1$.
\item If $\tau_+$ has a hat, then there are one or two arrows out of $\tau_+$.
  \begin{itemize}
  \item We can move the hat and label one step rightward, switching 
    places with the unhatted label just beyond, unless that involves
    switching a $\hat -$ with the last number in $\tau$. \
    Example: \qquad $\hat 3 -\ 2\ 1 \qtoq  -\ \hat 3\ 2\ 1 $.
  \item If the hatted label is the last number in $\tau$, we can 
    remove the hat.
    If it isn't, and the next label after the hatted label is larger
    (counting $-$ as $\infty$), 
    then the hat can jump one step rightward to that larger label,
    with the labels not moving. \\
    Example: \qquad $\hat 3 -\ 2\ 1 \qtoq 3\   \hat-\ 2\ 1 
    \quad\cancel\to\quad 3 -\hat 2\ 1 $.
  \end{itemize}
\end{itemize}

It's easy to see that if $\tau, \tau'$ are unhatted states, 
then $\tau \to \tau'$ in the digraph from \S \ref{sec:richer}
iff there is a directed path in this digraph from $\tau$ to $\tau'$ 
through only hatted states.

\subsection{The last Markov chain}\label{ssec:fiddlychain}

As twice before, we put probabilities on the reversed edges.

If $\tau$ is an unhatted state, the only edge into it has a hat on
the last number in $\tau$.
Whereas if $\tau_+$ is $\tau$ with a hat at position $0$, 
the only edge into $\tau_+$ comes from $\tau$.
In either case the unique edge gets probability $1$.

In the remaining case, $\tau_+$ has a hat at position $i > 0$, and we
must move the hat left one step. If the label to the left is smaller
than the hatted label, then we move the hat left with probability $1$
and are done. But if that label is larger, then with probability $1/q$, 
we move the label bearing the hat, not just the hat.
From the state $3\ - 2\ \hat 1$:
$$ 3\ - \hat 2\ 1   \qquad \stackrel{1-q^{-1}}\longrightarrow \qquad
3\ - 2\ \hat 1 \qquad \stackrel{q^{-1}}\longleftarrow \qquad
3\ -\hat 1\ 2 
$$
(Don't forget that the Markov chain runs backwards along the arrows.)

Again, we claim that if $\tau, \tau'$ are unhatted states, 
and we start this Markov chain at $\tau$ and stop when we next
meet an unhatted state, the probability that it is $\tau'$
is the same as that given by the Markov chain from \S \ref{sec:richer}.

\junk{

\subsection{The hatted linear algebra}

We need to hat the elements of $U_b$. Given $M \in U_b$, 
let $N_M \in \naturals$ denote the position of the last pivot column in $M$. 
The elements of $U_b^+$ will be equivalence classes 
of pairs $(M \in U_b, i\in [0, 1 + N_M])$ where $M$'s $i$th column is 
only recorded modulo the span of the columns to its left. 
Now we need a Markov chain on the states $U_b \coprod U_b^+$ that projects
to the one in \S \ref{ssec:fiddlychain}. 

If $M \in U_b$, its only transition is to $(M, N_M+1) \in U_b^+$ where $N_M$ 
is as above. (Note that column $N_M+1$ is completely indeterminate.)
Whereas $(M,0) \in U_b^+$ transitions to $M \in U_b$
(conversely, column $0$ is well-defined).
In the remaining case $(M,i>0,\vec c)$, we pick a lift 
$\vec c' \in \FF^b / span[0,i-1)$ with $\vec c' \mapsto \vec c$, 
and decrement $i$.  The choice $c'$ is unique except when $i-1$ is a
pivot column of $M$, in which case there are $q$ choices.

Informally, instead of having $\vec c$ appear at the front of $M$ where we
can see it clearly, it wanders in from the back, gradually coming into view.
In this way it obviously factors the Markov chain from \S \ref{sec:motivation}.

Finally, we need a map $\sigma_+$ from $U_b^+$ to the set of 
hatted juggling states. It takes $(M,i,\vec v)$ to $\sigma(M)$
hatted at $i$ with ... the number of pivot columns 

}

\section{Questions}

What is a linear algebra interpretation of the model in \S \ref{sec:hats}?

Is there an analogue of \cite{AMM} for the flag juggling digraph?

The stationary distributions computed here live on 
$S_{b+\naturals} / (S_b \times S_\naturals)$ and 
$S_{b+\naturals} / ((S_1)^b \times S_\naturals)$,
and easily generalize to other $W/W_P$ coset spaces
($\tau \mapsto q^{-\ell(\tau)}$ times a prefactor).
What is a Markov chain on $W/W_P$
for which they are the stationary distribution?

What is an analogue of the ball density function
calculated in \S \ref{sec:stats}, when the balls are colored
as in \S \ref{sec:richer}?

Is there a version of \S \ref{sec:intro} in which $b$ varies, whose
stationary distribution is still $q^{-\text{area}}$, up to overall
scale? Does that have a linear algebra interpretation?

Is the $q\to 1$ limit related in any substantive way to the mythical field 
of $1$ element?  Can that theoretical theory include the $b\to\infty$ limit?

It is easy to define a version of the digraph from \S \ref{sec:hats}
with multiple hatted labels percolating through independently. 
Is this of any use?

\bibliographystyle{alpha}    

\end{document}